\newtheorem{theorem}{Theorem}
\newtheorem{lemma}[theorem]{Lemma}
\newtheorem{rem}[theorem]{Remark}
\newtheorem{conj}[theorem]{Conjecture}
\newcommand{\M}{\mathfrak M}
\newcommand{\F}{\mathbb F}
\newcommand{\R}{\mathbb R}
\newcommand{\CC}{\mathbb C}
\newcommand{\A}{\mathcal A}
\theoremstyle{definition}
\theoremstyle{remark}
\numberwithin{equation}{section}
\DeclareMathOperator{\Image}{Im}
\DeclareMathOperator{\trace}{tr}
\DeclareMathOperator{\Sc}{Sc}
\DeclareMathOperator{\Char}{Char}
\DeclareMathOperator{\ssl}{sl}
\title[Images of non-associative multilinear polynomials]{The images of multilinear non-associative polynomials evaluated on a rock-paper-scissors algebra with unit over an arbitrary field.}
\author{Sergey Malev, Coby Pines}
\address{Ariel university, Ariel 40700, Israel\\ Department of Mathematics}
\email {sergeyma@ariel.ac.il}
\email {cobypinesdirac@gmail.com }
\thanks{We would like to thank A.Kanel-Belov and A. Domoshnitsky for interesting and fruitful discussions.}
\begin{document}

\begin{abstract}
Let $\F$ be an arbitrary field. We consider a  commutative, non-associative, $4$-dimensional algebra $\M$ of the rock, the paper and the scissors with unit  over $\F$ and we prove that the image over $\M$ of every non-associative multilinear polynomial over $\F$ is a vector space. The same question we consider for two subalgebras: an algebra of the rock, the paper and the scissors without unit, and an algebra of trace zero elements with zero scalar part. Moreover in this paper we consider the questions of possible evaluations of homogeneous polynomials on these algebras. 
\end{abstract}

\maketitle
\begin{flushright}
Dedicated to the 80-th anniversary of A.V. Mikhalev
\end{flushright}

\section{Introduction}
The study of images of polynomials evaluated on algebras is one of the most important branches of  modern algebra.
Similar questions for word maps in groups were considered in \cite{Bre,GKP}. Waring type problems for groups were investigated by Shalev \cite{Sha1,Sha2,Sha3}. Similar questions for matrix rings were investigated by Bre\v{s}ar (\cite{Bre}). A good survey describing these and other references can be found in \cite{BMRY}.

One of the central conjectures regarding possible evaluations of multilinear polynomials on matrix algebras was attributed to Kaplansky and formulated by L'vov in \cite{Dn}:
\begin{conj}[L'vov-Kaplansky] \label{kapl}Let $p$ be a non-commutative multilinear polynomial. Then the
set of values of $p$ on the matrix algebra $M_n(K)$ over an infinite field $K$ is a vector
space.
\end{conj}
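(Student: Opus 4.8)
Conjecture~\ref{kapl} is a famous open problem; what follows is the strategy one would pursue, together with the point at which it stalls. The plan is to exploit the two obvious symmetries of the image set $\Image p \subseteq M_n(K)$. First, multilinearity forces homogeneity: substituting $\lambda x_1$ for $x_1$ scales the value by $\lambda$, so $\Image p$ is a cone through the origin, and in particular $0 \in \Image p$. Second, substituting $g x_i g^{-1}$ for each variable $x_i$, with $g \in GL_n(K)$, shows that $\Image p$ is invariant under conjugation. If one already knew $\Image p$ to be a linear subspace, the classification of conjugation-invariant subspaces of $M_n(K)$ --- for $n$ not divisible by $\Char K$ these are exactly $\{0\}$, $K\cdot I$, $\ssl_n(K)$ and $M_n(K)$ --- would collapse the conjecture to a four-way dichotomy governed by PI-theoretic data: $\{0\}$ if $p$ is a polynomial identity, $K\cdot I$ if $p$ is central, $\ssl_n(K)$ if $\trace p$ vanishes identically but $p$ is not central, and $M_n(K)$ otherwise. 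So the real content is showing that $\Image p$ is closed under addition.

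The standard reduction steps are: (i) first treat $K$ quadratically (or algebraically) closed, where the geometry is cleanest, and descend to infinite $K$ afterwards; (ii) observe that $\Image p$ is a Zariski-constructible subset of $M_n(K)$, being the image of the $m$-variable polynomial map $(M_n(K))^m \to M_n(K)$ determined by $p$; (iii) compute its Zariski closure, which one expects to be exactly the invariant subspace $V$ predicted by the dichotomy above, by generic-matrix arguments; and (iv) upgrade ``$\Image p$ is dense in $V$'' to ``$\Image p = V$''. Step (iv) is the crux. For $n = 2$, and with considerably more effort for $n = 3$, it has been carried out by Kanel-Belov, Malev and Rowen through explicit substitutions adapted to the structure of small matrix algebras, but for general $n$ no mechanism is known that forces every point of the dense subspace to be attained.

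The principal obstacle I expect is the absence of an ``openness of dominant maps'' tool over an arbitrary field: over $\R$ or $\CC$ one can sometimes invoke that a dominant map which is smooth somewhere on a fibre is open, whereas over a general field a constructible dominant image may legitimately omit a thin exceptional set, and excluding this requires control over the fibres of $p$ that is presently available only for $n \le 3$. A secondary but unavoidable difficulty is the characteristic-$p$ bookkeeping: when $\Char K \mid n$ one has $I \in \ssl_n(K)$ and the list of conjugation-invariant subspaces changes, so the cases $\Char K \mid n$ must be handled separately --- precisely the kind of case analysis that the body of the present paper carries out for the rock-paper-scissors algebra $\M$.
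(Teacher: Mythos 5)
This statement is Conjecture~\ref{kapl}, the L'vov--Kaplansky conjecture: the paper offers no proof of it, states it explicitly as an open problem, and only proves an analogue for the non-associative rock-paper-scissors algebra $\M$. So there is no proof in the paper to compare yours against, and you were right not to manufacture one. Your write-up correctly identifies the statement as open, and your sketch of the standard attack --- the image is a conjugation-invariant cone, its linear span is one of $\{0\}$, $K\cdot I$, $\ssl_n(K)$, $M_n(K)$, the problem reduces to upgrading a Zariski-dense constructible image to the full subspace, and this has been carried out only for $n=2$ (and partially for $n=3$) by Kanel-Belov, Malev and Rowen --- is consistent with the literature the paper cites (\cite{BMR1,BMR2,BMR3,M1,BMRY}) and with the reformulation the paper records as its second conjecture.

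One small caution: you present step (iii), computing the Zariski closure and showing it equals the predicted invariant subspace $V$, as if it were routine ``generic-matrix'' bookkeeping and only step (iv) were open. In fact for general $n$ even the closure computation is not known; what is elementary is only that the \emph{linear span} of $\Image p$ is one of the four invariant subspaces (by an argument like Lemma~\ref{2 dim lemma} combined with conjugation-invariance), and it is open whether the image could, say, be a lower-dimensional cone inside $\ssl_n(K)$. Otherwise your assessment of where the difficulty lies, and of the characteristic-dividing-$n$ complications, is accurate.
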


It is well-known (\cite{BMR1,BMR2,BMR3,BMR4,M1,M2,BMRY}) that this conjecture can be reformulated as follows:
\begin{conj} If $p$ is a multilinear polynomial evaluated on the matrix ring
$M_n (K)$, then $\Image p$ is either $\{0\}, K, \ssl_n (K),$ or $M_n (K).$ Here $K$ indicates the set
of scalar matrices and $\ssl_n (K)$ is the set of  matrices with trace equal to zero.
\end{conj}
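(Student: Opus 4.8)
The plan is to classify $\Image p$ directly by exploiting two elementary invariance properties of the map $p$ and then reducing the four listed possibilities to a single additive-closure question. First I would record that, since $p$ is a polynomial in associative matrix variables, its evaluation commutes with conjugation:
\[
p(g x_1 g^{-1}, \dots, g x_m g^{-1}) = g\, p(x_1,\dots,x_m)\, g^{-1} \qquad (g \in GL_n(K)).
\]
Hence $\Image p$ is invariant under the adjoint action of $GL_n(K)$, i.e. it is a union of conjugacy classes. Second, because $p$ is multilinear of degree $m$, rescaling a single argument $x_i \mapsto \lambda x_i$ rescales the output by $\lambda$; therefore $\Image p$ is a cone, closed under multiplication by every scalar $\lambda \in K$. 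Consequently $\Image p$ is completely determined by which characteristic polynomials, up to the scaling $t \mapsto \lambda t$ of the eigenvalues, are realized, and the entire problem becomes showing that this set of orbits is forced to be exactly one of the four subspaces in the statement.

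The next step is a coarse trichotomy. If $p$ is a polynomial identity of $M_n(K)$ then $\Image p = \{0\}$ and we are done. Otherwise $\Image p \neq \{0\}$. If the image consists only of scalar matrices --- equivalently, $p$ is a central polynomial --- then the cone property forces $\Image p = K$, since a single nonzero scalar value can be rescaled to every element of $K$. In the remaining case $\Image p$ contains a non-scalar matrix, and here I would invoke that the trace is, up to a scalar, the unique conjugation-invariant linear functional on $M_n(K)$: either every value of $p$ has trace zero, so that $\Image p \subseteq \ssl_n(K)$, or some value has nonzero trace. These two sub-cases are the candidates for $\ssl_n(K)$ and $M_n(K)$ respectively, and it remains to show that in each the image fills the whole target subspace $V$.

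The heart of the argument, and the step I expect to be the main obstacle, is upgrading this invariance to genuine additive closure. The natural route is first to establish that $\Image p$ is Zariski dense in its candidate $V$ (after extending scalars to the algebraic closure $\overline{K}$, using that $K$ is infinite as in Conjecture~\ref{kapl}): the orbit map $g \mapsto g\, a\, g^{-1}$ from the irreducible variety $GL_n$ is dominant onto the conjugacy class of a generic value $a$, and letting the characteristic polynomial vary through the scaling cone sweeps out a dense subset of $V$. Density alone, however, does not yield a vector space: a conjugation- and scaling-invariant dense cone can still fail to be closed under addition, for instance by omitting a thin set of degenerate spectra. To close this gap one must use the specifically polynomial nature of the map --- realizing the sum of two given values as a single value of $p$ through an interpolation or connectedness argument that never leaves the image. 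This additive-closure step has no known general solution; it is precisely the remaining content of the L'vov--Kaplansky conjecture and is settled only for small $n$ (completely for $n=2$, where one checks by hand that the four orbit types above exhaust all conjugation- and scaling-invariant subsets, and partially for $n=3$ in characteristic zero). My proposal therefore carries out the structural reductions unconditionally and isolates additive closure as the single outstanding difficulty.
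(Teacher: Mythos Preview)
The statement you are attempting to prove is listed in the paper as a \emph{conjecture}, not a theorem: it is the standard reformulation of the L'vov--Kaplansky conjecture, and the paper offers no proof of it whatsoever. It is mentioned only as background motivation, with references to partial results for $n=2$ and $n=3$. There is therefore no ``paper's own proof'' to compare against.

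Your proposal is, by your own admission, not a proof either. The invariance observations (conjugation equivariance, the cone property from multilinearity) and the coarse trichotomy (PI, central, non-central with trace zero or not) are all correct and standard, and they do reduce the conjecture to the single assertion that a conjugation-invariant cone arising as a polynomial image must be additively closed. But you then explicitly state that this additive-closure step ``has no known general solution'' and is ``precisely the remaining content of the L'vov--Kaplansky conjecture.'' That is accurate --- and it means what you have written is an exposition of why the problem is hard, not a proof. The density claim you sketch is also softer than you suggest: a generic conjugacy class together with scalar dilations need not be Zariski dense in $\ssl_n$ or $M_n$ without further argument, and in any case density in the Zariski topology does not help with surjectivity over the original infinite field $K$. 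So the gap is exactly the one you name, and it is genuine: no one currently knows how to close it for general $n$.
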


When $n=2$, for the case of $K$ being quadratically closed it was proved in \cite{BMR1}, and in \cite{M1} it was proved for the case of $K=\R$, and an interesting result was obtained for arbitrary fields.

For $n>2$ this question was considered in \cite{BMR2,BMR3,BMRY} and partial results were obtained.
In \cite{M2} the same question was considered for the algebra of quaternions with the Hamilton multiplication and it was shown that any evaluation of a multilinear polynomial  is a vector space. In the same paper it was said that this question is interesting only for simple algebras, since for non-simple algebras it may be answered negatively. Indeed, this conjecture fails for the Grassmann algebra.

Nevertheless, there is an interest in the investigation of this question for non-simple algebras: for some of them the Kaplansky question can be answered positively. For example, if we consider the $8$-dimensional algebra $M_2(K) \oplus M_2(K)$ (which is not simple) then it is easy to see that the evaluation of any multilinear polynomial is a pair of its evaluations on $M_2(K)$ and thus, the only possible multilinear evaluations are vector spaces:
$$ \{0\}, K\oplus K, \ssl_2 (K)\oplus\ssl_2(K), \ \text{or}\  M_2 (K)\oplus M_2(K).$$

In this paper we consider a non-simple algebra, defined in Section \ref{prel}. Unlike the previous results regarding this area, this algebra is non-associative and commutative and, although  we consider non-associative commutative evaluations, the answer to the L'vov-Kaplansky conjecture for this algebra is positive.

Previously, non-associative algebras (in particular the algebra of Cayley numbers) were considered in \cite{ZSSS}. The question of possible
multilinear non-associative evaluations was considered in \cite{BMR4} where the Kaplansky conjecture was considered for Lie polynomials.

In \cite{BMR1,BMR2,BMR3,BMR4,M1,M2,BMRY} the question of possible semi-homogeneous evaluations was investigated, and here we consider such a question as well. Unfortunately, we have not succeeded to answer it. However, in Section \ref{homogen} we formulate interesting conjectures and discuss them.

\section{Preliminaries}\label{prel}

Let $(X,\cdot)$ be a finite monad i.e $X$ is a finite set together with a binary operation $\cdot$ for which there is a unit element with respect to~$\cdot$.
Let $\F$ be an arbitrary field and denote by $M_{ \F}(X)$ the free vector space over  $\F$ with basis set $X$. By extending multiplication from  $X$ to $M_{ \F}(X)$ bilinearly, we obtain a non-associative algebra with unit. We call $M_{ \F}(X)$ the monad algebra of $X$ over $\F$.\newline   

We now consider the monad $X=\{1,R,P,S\}$ with multiplication defined as follows: multiplication is commutative, $1$ serves as the unit element, all elements of $X$ are idempotent and  $R\cdot P =P$, $P\cdot S =S$ and $S\cdot R =R$. This monad is the well known rock- paper- scissors magma, where each element is idempotent and the product is the winner in one of the most well-known games.\newline
In what follows, we shall write $\M:=M_{\F}(X)$. It follows that $\M$ is a commutative, non-associative, $4$-dimensional algebra with unit over $\F$.\newline
When evaluating polynomials on non-associative algebras, we must consider non-associative polynomials. A non-associative polynomial over a field $\F$ is, intuitively, a polynomial over $\F$ in which the placing of brackets in each monomial matters.
For instance, the polynomials: 
$$
p_1(x_1,x_2,x_3,x_4)=x_1(x_2(x_3x_4)),\ 
p_2(x_1,x_2,x_3,x_4)=(x_1x_2)(x_3x_4)
$$
are considered to be  different non-associative polynomials.\newline

Let $\A$ be a non-associative algebra over a field $\F$ and let $p(x_1,x_2,\dots,x_m)$ be a non-associative polynomial over $\F$.\newline
The associated polynomial function is denoted by $\Tilde{p}$. \newline
The image of $p$ over $\A$, $\Image _{\A}(p)$, is the evaluation of the associated polynomial function $\Tilde{p}$ on the  algebra $\A$. Usually the choice of the algebra is evident, and we write simply $\Image p$.

A non-associative polynomial over a field $\mathbb{F}$ is called  multilinear
if it is linear with respect to each of the variables. This means in  particular, that each variable appears exactly once in each monomial.

For $x=x_01+aP+bR+cS \in\M$, we define
the scalar part of $x$ to be $\Sc(x):=x_0$ and
the trace of $x$ to be $\trace(x):=x_0+a+b+c$.\newline
It is obvious that $ \Sc\colon {\M}\rightarrow \F$ and $ \trace \colon {\M}\rightarrow \F$ are linear functions on the vector space ${\M}$.
Moreover, they also respect multiplication on ${\M}$ i.e for $x,y \in {\M}$:
\begin{equation*}
 \Sc(x \cdot y)=\Sc(x)\Sc(y)
 \end{equation*}
\begin{equation*}
\trace(x \cdot y)=\trace(x)\trace(y)
 \end{equation*}
Therefore, both functions $\Sc(*)$ and $\trace(*)$ are homomorphisms from $\M$ to $\F$.

We denote by $\M_{0}$ the set of elements of $\M$ with zero trace and zero scalar part.\newline
It follows that $\M_{0}$ is a two dimensional subspace of $\M$, and moreover, it is an ideal and a subalgebra of $\M$.

\section{Main Theorem}
\begin{theorem}\label{main}
Let $\F$ be an arbitrary field. If $p(x_1,x_2,\dots,x_m)$ is a  multilinear commutative non-associative  polynomial with coefficients in $\F$, then the evaluation of $p$ over $\M$ is either:
\begin{enumerate}
\item $\{0\}$;
\item $\left<P+R\omega+S\omega^2\right>$;
\item $\left<P+R\omega^2+S\omega\right>$;
\item ${\M}_{0}$; 
\item $\M$.
\end{enumerate}
\end{theorem}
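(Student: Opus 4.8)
The plan is to split into cases according to the scalar $\sigma_p$ equal to the sum of all coefficients of $p$, and to exploit the two homomorphisms $\Sc,\trace\colon\M\to\F$ together with the cyclic automorphism of $\M$.

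First, a trivial but decisive reduction. Substituting $x_2=\dots=x_m=1$, every bracketed monomial of $p$ collapses to $x_1$: since $1$ is a two-sided unit, any bracketed product of copies of $1$ equals $1$, and then $x_1$ multiplied (in any bracketing) by a product of copies of $1$ equals $x_1$. Hence $p(x_1,1,\dots,1)=\sigma_p\,x_1$. Therefore, if $\sigma_p\neq 0$ then $\Image p\supseteq\sigma_p\M=\M$, i.e.\ $\Image p=\M$ (case (5)); and if $p\equiv 0$ we are in case (1). So from now on assume $\sigma_p=0$ and $p\not\equiv 0$. Since $\Sc$ and $\trace$ are homomorphisms into the associative commutative field $\F$, for any $a_1,\dots,a_m\in\M$ one has $\Sc\bigl(p(a_1,\dots,a_m)\bigr)=\sigma_p\prod_i\Sc(a_i)=0$ and likewise $\trace\bigl(p(a_1,\dots,a_m)\bigr)=0$; hence $\Image p\subseteq\M_{0}$, which is only $2$-dimensional.

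Next I would prove the key lemma: for any $\F$-vector spaces $V_1,\dots,V_m$ and any $W$ with $\dim_\F W\le 2$, the image (as a set) of a multilinear map $f\colon V_1\times\dots\times V_m\to W$ is $\{0\}$, a one-dimensional subspace, or all of $W$. For $m=1$ this is linear algebra. For $m\ge 2$, write $f=\sum_k x_m^{(k)}g_k$ in terms of the components $g_k\colon V_1\times\dots\times V_{m-1}\to W$ of $f$ in the last slot; by induction each $\Image g_k$ is $\{0\}$, a line, or $W$. If some $g_k$ is onto $W$, or if the family $\{g_k(\vec x)\}_k$ spans $W$ for some fixed $\vec x$, one reads off at once that $f$ is surjective; otherwise all the nonzero $g_k$ take values in a single common line (this is the step where one must argue, slot by slot, that $g_k\in\ell$ wherever a fixed nonzero $g_{k_0}$ is nonzero, which uses two distinct nonzero scalars, so $\F_2$ needs separate treatment — see below), whence $\Image f$ equals that line. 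Applying the lemma to $p$ regarded as a multilinear map $\M^{m}\to\M_{0}$, the set $\Image p$ is $\{0\}$ (case (1)), a one-dimensional subspace of $\M_{0}$, or all of $\M_{0}$ (case (4)).

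Finally, to pin down the one-dimensional case I would use the automorphism $\sigma$ of $\M$ fixing $1$ and cycling $R\mapsto P\mapsto S\mapsto R$ (a direct check shows $\sigma$ respects the multiplication). Since $\sigma\bigl(p(a_1,\dots,a_m)\bigr)=p(\sigma a_1,\dots,\sigma a_m)$, the set $\Image p$ is $\sigma$-invariant, so a one-dimensional $\Image p$ must be a $\sigma$-invariant line in $\M_{0}$; diagonalizing $\sigma$ on $\M_{0}$ (whose trace-zero description exhibits it as the sum of the $\omega$- and $\omega^2$-eigenspaces) shows these lines are exactly $\langle P+R\omega+S\omega^2\rangle$ and $\langle P+R\omega^2+S\omega\rangle$ when $\F$ contains a primitive cube root of unity $\omega$ (in characteristic $3$ the two coincide with $\langle P+R+S\rangle$), while $\M_{0}$ has no $\sigma$-invariant line otherwise, so then cases (2) and (3) simply do not arise. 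For $\F=\F_2$ the argument bypasses the lemma altogether: $\M_{0}\cong\F_2^{2}$ carries $\sigma$ as an order-$3$ map with a single nonzero orbit, so its only $\sigma$-invariant subsets are $\{0\}$ and $\M_{0}$, forcing $\Image p\in\{\{0\},\M_{0}\}$. I expect the main obstacle to be precisely the multilinear-image lemma and making its "common line" step work uniformly over all fields, including the smallest ones; should that prove awkward, an alternative is to classify multilinear polynomial functions on the $2$-dimensional algebra $\M_{0}$ directly, e.g.\ through a normal form after base change to $\F(\omega)$, and read off the possible images.
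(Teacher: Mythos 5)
Your proposal is correct and follows essentially the same route as the paper: substitute units to handle the case of nonzero coefficient sum, use the homomorphisms $\Sc$ and $\trace$ to confine the image to $\M_0$ when the sum vanishes, invoke the fact that a multilinear image inside a $2$-dimensional space is a subspace, and classify the invariant lines via the order-$3$ cyclic automorphism. The only difference is that the paper simply cites this multilinear-image fact as Lemma~3 of \cite{M1} (valid over arbitrary fields), whereas you re-derive it and patch the $\F_2$ case separately by the orbit argument; both routes reach the same conclusion.
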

By $\omega$ we denote $1$ if $\Char\F=3$, and a primitive cube root of $1$ otherwise (if such an element exists in $\F$). If such element $\omega$ does not exist in $\F$, options (2) and (3) are impossible. As well-known, this element ($\omega$) satisfies $\omega^2+\omega+1=0$. In particular, such an element exists in $\CC$, although it does not exist in  $\R$.
We will use the following basic fact from linear algebra: 
\begin{lemma}[\cite{M1}, Lemma~3]\label{2 dim lemma}
Let $L$  be a vector space over a field $\F$ and suppose that  $f\colon L\times \cdots \times L\rightarrow L$  is a multilnear map. Assume that  $\Image(f)$ contains two  vectors which are not proportional. Then $\Image(f)$ contains a two dimensional subspace. In particular, if $\Image(f)$  is contained in a  two dimensional subspace $M$, then $\Image(f)=M$.
\end{lemma}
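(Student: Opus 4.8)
The plan is to use only two structural facts about the image of a multilinear map: that it is closed under scalar multiplication, and that fixing all but one of the arguments turns $f$ into an honest linear map whose image is a genuine subspace of $\Image(f)$.

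First I would record the \emph{cone property}: since $f$ is linear in its first slot, $f(\lambda v_1, v_2, \dots, v_m) = \lambda f(v_1, \dots, v_m)$, so $\Image(f)$ is closed under multiplication by scalars of $\F$. Next I would isolate the key reduction. Call a \emph{slice} any linear map $v \mapsto f(c_1, \dots, c_{i-1}, v, c_{i+1}, \dots, c_m)$ obtained by freezing all arguments except one; its image is a linear subspace, and since every value is a genuine evaluation of $f$, this subspace lies inside $\Image(f)$. Hence it suffices to exhibit a single slice of rank $\ge 2$: any two-dimensional subspace of its image then proves the first assertion, and the ``in particular'' follows at once, since a two-dimensional subspace contained in the two-dimensional $M$ must equal $M$.

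To produce such a slice, write the two non-proportional vectors as $u = f(a_1, \dots, a_m)$ and $w = f(b_1, \dots, b_m)$, and note $u \neq 0$. I would argue by induction on $d = \#\{i : a_i \neq b_i\}$, restricting attention to the $d$ arguments in which $\bar a$ and $\bar b$ differ (freezing the others at their common value), so that $f$ becomes a $d$-linear map $g$ whose ``all-$a$'' corner is $u$ (lying on the line $\F u$) and whose ``all-$b$'' corner is $w \notin \F u$. When $d = 1$ the single slice already sends $a_i \mapsto u$ and $b_i \mapsto w$, hence has rank $\ge 2$. For $d \ge 2$ I single out one differing coordinate and compare the two $(d-1)$-linear maps $g_a, g_b$ obtained by fixing it at $a_i$ or at $b_i$, together with the slice along that coordinate. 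A short case analysis on whether the top corner of $g_a$ stays outside $\F u$, whether the bottom corner of $g_b$ leaves $\F u$, and whether these corners are zero, either exhibits a rank-$2$ slice directly or feeds a strictly smaller instance of the inductive hypothesis whose bottom corner is still a nonzero element of $\F u$ and whose top corner still avoids $\F u$.

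The main obstacle, and the device that makes the induction close, is the degenerate branch in which fixing the chosen coordinate kills the distinguished corner value, so that neither $g_a$ nor $g_b$ inherits both a nonzero $\F u$-bottom and a top outside $\F u$. The resolution is the \emph{sum trick}: replace that coordinate by $a_i + b_i$ and use linearity to express the resulting $(d-1)$-linear map as $g_a + g_b$. Its bottom corner equals $u$ plus a contribution that vanished in this branch, hence remains a nonzero element of $\F u$, while its top corner equals $w$ plus an element of $\F u$, hence remains outside $\F u$; this is precisely a smaller instance to which the inductive hypothesis applies. I expect the whole argument to collapse to this single mechanism, the only genuine subtlety being the verification that at each branch the two distinguished corners stay linearly independent, so that either a direct slice or the inductive step really delivers rank $2$.
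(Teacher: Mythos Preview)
Your argument is correct. Note, however, that the paper does not give its own proof of this lemma: it is quoted from \cite{M1} (Lemma~3 there) and used as a black box, so there is no in-paper proof to compare against.

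On the substance of your plan: the reduction to exhibiting a single slice of rank at least $2$ is exactly the right move, since the image of a slice is an honest linear subspace of $\Image(f)$. Your induction on the number $d$ of coordinates in which $\bar a$ and $\bar b$ differ closes cleanly once the case analysis is spelled out. Writing $p=g(a_1,b_2,\dots,b_d)$ and $q=g(b_1,a_2,\dots,a_d)$, one checks: if $p\notin\F u$ then $g_a$ feeds the induction; if $p\in\F u\setminus\{0\}$ then the slice through $(b_2,\dots,b_d)$ already hits the independent pair $p,w$; if $p=0$ and $q\notin\F u$ then the slice through $(a_2,\dots,a_d)$ hits $u,q$; if $p=0$ and $q\in\F u\setminus\{0\}$ then $g_b$ feeds the induction. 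The only residual branch is $p=q=0$, and there your sum trick $g_{a_1+b_1}=g_a+g_b$ has bottom corner $u+q=u$ and top corner $p+w=w$, so the induction applies verbatim. Every slice produced along the way is a genuine slice of the original $f$, hence its image lies in $\Image(f)$; that is all you need. The ``in particular'' clause then follows immediately.
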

\begin{rem}\label{lowdim}
As a consequence, one can immediately conclude from Lemma \ref{2 dim lemma}, that the image set of any multilinear map (in particular of a multilinear polynomial) is either a vector space or at least a $3$-dimensional set. By the dimension of an image set, we  mean the dimension of its Zariski closure. Note that a polynomial image is not necessarily Zariski closed.
\end{rem}
Let us first prove the following Lemma:
\begin{lemma}\label{lemma-vec}
Let $\F$ be an arbitrary field. If $p(x_1,x_2,\dots,x_m)$ is a  multilinear commutative non-associative  polynomial with coefficients in $\F$, then the evaluation of $p$ over $\M$ is either:
\begin{enumerate}
\item $\{0\}$;
\item some one-dimensional vector space;
\item ${\M}_{0}$; 
\item $\M$.
\end{enumerate}
\end{lemma}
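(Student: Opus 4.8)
The plan is to reduce the whole statement to a single scalar invariant of $p$ — the sum $c=c(p)$ of all coefficients of its monomials — and then to argue separately in the cases $c\neq 0$ and $c=0$. The key observation I would record first is that, since $\Sc$ and $\trace$ are algebra homomorphisms from $\M$ onto the commutative associative algebra $\F$, and since in $\F$ every multilinear monomial in $x_1,\dots,x_m$ (with any bracketing and any order of the factors) equals $x_1x_2\cdots x_m$, one has for all $x_1,\dots,x_m\in\M$
\[
\Sc\bigl(\tilde p(x_1,\dots,x_m)\bigr)=c\prod_{i=1}^m\Sc(x_i),\qquad
\trace\bigl(\tilde p(x_1,\dots,x_m)\bigr)=c\prod_{i=1}^m\trace(x_i).
\]
Also, because $p$ is multilinear, $\Image p$ is a cone: if $v\in\Image p$ then $\lambda v\in\Image p$ for every $\lambda\in\F$ (replace one argument by $\lambda$ times itself; the value $0$ is always attained).

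Suppose first that $c\neq 0$. Here I would prove the elementary ``collapsing'' fact that substituting the unit $1$ for every variable except one, say $x_1$, in a multilinear monomial yields exactly $x_1$. This is a structural induction on the bracketing tree: any subtree all of whose leaves are $1$ evaluates to $1$ (as $1\cdot 1=1$), and if a node has subtrees $M',M''$ with $x_1$ occurring in $M'$, then $M'$ evaluates to $x_1$ by induction, $M''$ evaluates to $1$, and the node evaluates to $x_1\cdot 1=x_1$. Summing over the monomials of $p$ gives $\tilde p(u,1,\dots,1)=c\,u$ for every $u\in\M$, so, since $c$ is invertible, $\Image p\supseteq c\,\M=\M$, whence $\Image p=\M$; this is option $(4)$.

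Now suppose $c=0$. Then the two displayed identities give $\Sc(\tilde p)\equiv\trace(\tilde p)\equiv 0$ identically, i.e. $\Image p\subseteq\M_0$, which is a two-dimensional subspace of $\M$. By Remark~\ref{lowdim} an image lying inside a two-dimensional space cannot be ``at least three-dimensional'', hence is a vector space; and a subspace of the two-dimensional space $\M_0$ is either $\{0\}$, a one-dimensional space, or $\M_0$ itself — options $(1)$, $(2)$, $(3)$. (If one prefers to avoid the Remark, the same trichotomy follows directly from Lemma~\ref{2 dim lemma} together with the cone property: a non-zero cone contained in $\M_0$ in which all vectors are pairwise proportional is a full line, while a cone in $\M_0$ containing two non-proportional vectors already contains a two-dimensional subspace, which being inside $\M_0$ must equal $\M_0$.)

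I do not expect a genuine obstacle in this lemma: the content is the bookkeeping above, and the only mildly delicate points are the structural induction for the collapsing fact and the cone/trichotomy argument when $c=0$. The real work lies beyond this lemma — in Theorem~\ref{main} one still has to show that the only one-dimensional images that occur are the two specific lines $\left<P+R\omega+S\omega^2\right>$ and $\left<P+R\omega^2+S\omega\right>$, and that $\M_0$ is the only two-dimensional image; this requires a finer analysis of the induced non-associative multiplication on $\M_0$ and of how arbitrary elements of $\M$, not only those of $\M_0$, enter an evaluation with $c(p)=0$.
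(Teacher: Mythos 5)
Your proposal is correct and follows essentially the same route as the paper: split on the sum $c$ of the coefficients, obtain $\Image p=\M$ via substituting the unit when $c\neq 0$ (the paper writes $a=\tilde p(c^{-1}a,1,\dots,1)$ where you write $\tilde p(u,1,\dots,1)=cu$), and when $c=0$ use the homomorphisms $\Sc$ and $\trace$ to confine the image to the two-dimensional space $\M_0$ and invoke Lemma~\ref{2 dim lemma}/Remark~\ref{lowdim}. Your added structural induction for the collapsing fact and the alternative cone-based trichotomy are just more explicit versions of steps the paper leaves implicit.
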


\begin{proof}
Since  $p(x_1,\dots,x_m)$ is multilinear, it is a linear combination of monomials such that, in each monomial, every variable $x_i$ appears exactly once.
Let the coefficients of the monomials of $p$ be  $c_1,\dots,c_d$. Suppose that $c:=\sum\limits_{i=1}^{d}c_i\neq 0$. let $a\in \M$. Then $a=\Tilde{p}(c^{-1}\cdot a,1,\dots,1)$ and thus $\Image(p)=\M$.\newline
Thus, suppose that $c=0$. If $a\in \Image(p)$, then there exist elements  $a_1,\dots,a_m \in \M$ such that $a=\Tilde{p}(a_1,\dots,a_m)$. Since $\F$ is commutative and associative with respect to multiplication  and $ \trace$ and $\Sc$ are linear and respect multiplication, it follows that \begin{equation*}
  \trace(a)=\trace(a_1)\cdots \trace(a_m) \cdot \sum_{i=1}^{d}c_i=0  
\end{equation*} 
and similarly $\Sc(a)=0$.
Hence, $\Image(p)$ is contained in $\M_{0}$. Note that $\dim\ \M_0=2$. Thus, according to Remark \ref{lowdim}, $\Image p$ is a vector space: either $2$-dimensional (and thus coincides with $\M_{0}$), or of dimension $1$ or $0$. In the last case $p$ is a PI.
\end{proof}

We now give the proof of Theorem \ref{main}:
\begin{proof}
In view of Lemma \ref{lemma-vec}, we have to classify $1$-dimensional images. Consider the linear map $\varphi:\M\rightarrow\M$ defined as follows:
$$1\mapsto 1,\ P\mapsto R, R\mapsto S, S\mapsto P.$$
It is not difficult to see that this map respects multiplication and thus is an automorphism of $\M$.
Therefore, if $p(x_1,\dots,x_m)$ is a multilinear polynomial evaluated on $\M$, then for any values of $x_i$ we have  $$\varphi(p(x_1,\dots,x_m))=
p(\varphi(x_1),\dots,\varphi(x_m)).$$
Hence, if $\Image p$ is a $1$-dimensional vector space spanned by an element $x=aP+bR+cS\in\M_0$, $\varphi(x)$ should be proportional to $x$ i.e it should span the vector space. Note that $\varphi(aP+bR+cS)=cP+aR+bS$. Thus, $\frac{c}{a}=\frac{a}{b}=\frac{b}{c}$ and it is not difficult to see that this ratio can be only a cube root of $1$. If $\Char\F=3$, the only element in $\F$ satisfying this property is $1$ and thus, $\Image p=\left<P+R+S\right>$. If $\Char\F\neq 3$ and $\omega\in\F$, there are three elements satisfying this property: $1,\omega$ and $\omega^2$. However, the option $a=b=c$ is impossible since $\trace(P+R+S)=3$ and this is not an element of $\M_0$. 
\end{proof}
\section{Examples}

The most important thing to understand is whether or not polynomials with such images exist. Of course, it is very simple to construct an example of a polynomial whose image set is $\M$: one can take $p(x)=x$, or any other multilinear polynomial with nonzero sum of coefficients. 

It is not difficult to see that the set $\M_0$ can also be achieved, for instance, as the image of the polynomial $g(x,y,z)=(xy)z-(xz)y.$ Indeed, its image contains the element 
$g(P,R,S)=(PR)S-P(RS)=PS-PR=S-P$ and thus, by Theorem \ref{main}, $\Image g=\M_0$.
Unfortunately, we have not succeeded in constructing examples of multilinear polynomials with $1$-dimensional images. However, most likely they exist. Polynomial identities exist as well. Indeed, the computation shows that the polynomial $f(x,y,z,t)=(xy)(zt)-(xz)(yt)$ evaluated on $\M_0$ is a PI (in Lemma~\ref{pi0}, we prove this for the case $\Char\F\neq 3$. However, this is true for arbitrary fields and can be easily checked by basis element evaluations. For example, one can take  the elements $P-R$ and $R-S$ as a basis for $\M_0$ and check all $16$ evaluations). We can take four polynomials with image sets $\M_0$ and put them instead of $x,y,z$ and $t$ inside $f$: 
$x=g(x_1,x_2,x_3),\ y=g(y_1,y_2,y_3),\ z=g(z_1,z_2,z_3)$ and $t=g(t_1,t_2,t_3)$. As a result, we obtain a multilinear polynomial in $12$ variables which is a commutative non-associative polynomial identity of the algebra $\M$. Of course, $12$ is not the minimal possible degree for polynomial identities of $\M$.

\section{PI algebras}
Moreover, any finite dimensional commutative non-associative/non-commutative associative/non-commutative non-associative algebra is a PI algebra with nontrivial multilinear PI (i.e. PI of the same type as a type of an algebra). 
Indeed, let us compute the number of possible multilinear monomials of degree $m$: the number of associative non-commutative monomials is $m!$, the number of non-associative non-commutative monomials is $m!\cdot C_{m-1},$ where $C_{m-1}=\frac{1}{m} \binom{2m-2}{m-1}$ is a Catalan number.
In our case, we are interested in multilinear non-associative commutative monomials. Each monomial has exactly $m-1$ multiplications and for each of them we can change places of the multipliers. Thus, there are exactly $2^{m-1}$ different non-commutative non-associative monomials, corresponding to each commutative non-associative monomial, and therefore, the number of commutative non-associative monomials is  $m!\cdot C_{m-1}\cdot 2^{1-m}.$
If our algebra is finite dimensional, let $d$ be its dimension, $A=\left<E_1,\dots, E_d\right>$. In this case, a nonzero multilinear polynomial $p(x_1,\dots,x_m)$ is a PI if and only if its evaluations on all sets of basis elements $E_i$ is zero. We have exactly $d^m$ such evaluations, each of them has $d$ coordinates, and therefore, we have a system of $d^{m+1}$ linear equations, where the unknowns are the coefficients of $p$. The number of unknowns is the number of possible monomials. Hence, the existence of a PI follows from existence of a number $m$, such that the number of monomials  is larger than $d^{m+1}$. Remember, that for each $m$, these numbers (depending on the type of algebra) are $m!,\ m!\cdot C_{m-1}$, and $m!\cdot C_{m-1}2^{m-1}.$ For large $m$, these numbers exceed $d^{m+1}$ and thus, such  an $m$ exists.

\section{Subalgebras, good basis, automorphisms and polynomial identities}
Let us consider the algebra $\M_0$ separately. Unlike $\M$,  this is a simple algebra, i.e it does not contain any non-trivial ideals. In this section we assume that $\Char\F\neq 3$ and $\omega\in \F$. The second condition can be achieved if instead of the field $\F$, we consider either its algebraic closure or its extension $\F[\omega]$.
In this case, $\M_0$ has the following basis (called "the good basis"):
$$U=\frac{1+2\omega}{3}\left(P+R\omega+S\omega^2\right),$$
$$V=\frac{1+2\omega^2}{3}\left(P+R\omega^2+S\omega \right).$$
Note that the condition $\Char\F\neq 3$ is important:  if $\Char\F=3$ these elements $U$ and $V$ are not well defined.
A simple computation shows that $$U^2=V,\ V^2=U, \ \text{and}\ UV(=VU)=0.$$
The automorphism $\varphi$ of order $3$ defined in the proof of Theorem \ref{main}, induces an automorphism of $\M_0$:
$\varphi(U)=\omega^2 U,\ \varphi(V)=\omega V.$
\begin{rem}
There is another important automorphism $\psi$ of $\M_0$: $U\mapsto V,\ V\mapsto U$. This automorphism cannot be extended to $\M$. Nevertheless, if $\F=K[\omega]$ for some subfield $K$ which does not contain $\omega$, this automorphism can be extended from $\M_0(K)$ to $\M(\F)$ considered as an algebra over $K$: $U\mapsto V,\ V\mapsto U, \omega\mapsto\omega^2,\omega^2\mapsto\omega.$ The second two evaluations define the conjugation automorphism of $\F$ preserving $K$. In the usual basis $\{1,P,R,S\}$ of $\M$ it leaves basic elements and conjugates field coefficients only. The order of $\psi$ is $2$.

Note that not every field $\F$ containing $\omega$ can be presented as $K[\omega]$. For instance, the field $\F_7$ has an element $\omega=2\in\F_7$ but does not have any subfields at all.

\end{rem}

Now we are ready to prove a lemma about polynomial identities of $\M_0$:
\begin{lemma}\label{pi0}
The polynomial $f(x,y,z,t)=(xy)(zt)-(xz)(yt)$ is a PI of $\M_0$.
\end{lemma}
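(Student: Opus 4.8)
Since $f$ is multilinear, it is a polynomial identity of $\M_0$ if and only if it vanishes on every tuple of basis elements; this is the standard reduction for multilinear identities on finite-dimensional algebras recalled in the previous section. So the plan is to work in the good basis $\{U,V\}$ of $\M_0$ and check all tuples $(x,y,z,t)\in\{U,V\}^4$, using only the multiplication table $U^2=V$, $V^2=U$, $UV=VU=0$.

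First I would use the obvious symmetries of $f$ coming from commutativity of $\M$ to shrink the casework. If $y=z$, then $(xy)(zt)=(xz)(yt)$ and $f$ evaluates to $0$; similarly, if $x=t$, then the two products again coincide and $f$ evaluates to $0$. Hence only tuples with $y\neq z$ and $x\neq t$ can possibly give a nonzero value, and since the basis has two elements this forces $\{y,z\}=\{U,V\}$ and $\{x,t\}=\{U,V\}$. This leaves exactly four tuples to inspect: $(U,U,V,V)$, $(U,V,U,V)$, $(V,U,V,U)$, $(V,V,U,U)$.

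Finally I would evaluate $f$ on each of these four tuples directly. In every one of them, one of the two products $(xy)(zt)$, $(xz)(yt)$ contains a factor of the form $UV$ (which is $0$), while the other is a product of the shape $U\cdot V$ or $V\cdot U$ of two squares (which is again $0$); so both products vanish and $f$ evaluates to $0$. Since $f$ vanishes on all $16$ basis tuples, it is a PI of $\M_0$. I do not expect any real obstacle here: it is a short finite computation, and the only point worth highlighting is the use of the $y=z$ and $x=t$ degeneracies to reduce sixteen cases to four. One can also note, as the authors do, that the same direct check goes through over an arbitrary field in the basis $\{P-R,\,R-S\}$ of $\M_0$, so the restriction $\Char\F\neq 3$, $\omega\in\F$ is only a convenience making the multiplication table as simple as possible.
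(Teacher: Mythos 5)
Your proposal is correct and follows essentially the same route as the paper: both reduce the multilinear identity to a check on the basis $\{U,V\}$ of $\M_0$ and exploit the multiplication table $U^2=V$, $V^2=U$, $UV=0$. The paper organizes the finite check by observing that $(xy)(zt)$ is nonzero only when $x=y=z=t$ (in which case the two terms cancel), whereas you use the $y=z$ and $x=t$ degeneracies to cut to four explicit tuples; this is a cosmetic difference in bookkeeping, not a different argument.
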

\begin{proof}
Consider the evaluations of $f(x,y,z,t)$ on the basis elements $U$ and $V$: for such evaluations, a product of two different elements equals zero, and a product of the type $(xy)(zt)$ is not zero when $x=y$, $z=t$ and $xy=zt$, which happens if and only if $x=y=z=t$. However, in this case $(xy)(zt)=(xz)(yt)$ and therefore $f$ is PI.
\end{proof}
\begin{rem}
Note that if a multilinear polynomial $p(x_1,\dots,x_m)$ has a $1$-dimensional image, i.e. either $<U>$ or $<V>$,
it is PI of $\M_0$.
Indeed, its evaluation on $\M_0$ must either coincide with its evaluation on $\M$ or it must be $\{0\}$. However, it should be invariant under the automorphism $\psi$, which is possible only in the case that $p$ is PI of $\M_0$. Nevertheless, this  does not imply that such a polynomial does not exist: even in the case  $\F=K[\omega]$, where  $\psi$ can be extended to $\M$, we need to conjugate elements of $\F$, and this will change the coefficients of the polynomial $p$. The problem considering the  existence  of such polynomials remains being open.
\end{rem}
The possible multilinear evaluations on $\M_0$ are described in
\begin{theorem}
Let $\F$ satisfy $\Char\F\neq 3$. If $p(x_1,x_2,\dots,x_m)$ is a  multilinear commutative non-associative  polynomial with coefficients in $\F$, then the evaluation of $p$ over ${\M}_0$ is either:
\begin{enumerate}
\item $\{0\}$;
\item ${\M}_{0}$; 
\end{enumerate}
\end{theorem}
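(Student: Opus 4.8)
The plan is to reduce, exactly as in the proof of Theorem~\ref{main}, to the problem of excluding a one-dimensional image, and then to exploit \emph{two} automorphisms of $\M_0$ simultaneously. First I would invoke Remark~\ref{lowdim} (which rests on Lemma~\ref{2 dim lemma}): since $\M_0$ is two-dimensional, any evaluation of a multilinear polynomial on $\M_0$ is automatically a vector subspace of $\M_0$, hence equal to $\{0\}$, to a line, or to all of $\M_0$. So the entire content of the statement is the claim that a one-dimensional image is impossible.

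Suppose then that $\Image_{\M_0}(p)=\langle w\rangle$ for some nonzero $w\in\M_0$. The first key step is that the order-three automorphism $\varphi$ (defined in the proof of Theorem~\ref{main}) restricts to an automorphism of $\M_0$, because $\trace\circ\varphi=\trace$ and $\Sc\circ\varphi=\Sc$, so $\varphi(\M_0)=\M_0$. Since an algebra automorphism permutes the tuples of arguments, it maps a polynomial image onto itself; hence $\varphi(\langle w\rangle)=\langle w\rangle$, that is, $\varphi(w)=\lambda w$ for some $\lambda\in\F^\times$. Writing $w=aP+bR+cS$ and repeating the short computation from the proof of Theorem~\ref{main} shows that $\lambda^3=1$, and that $\lambda=1$ is excluded, since it would force $w$ proportional to $P+R+S\notin\M_0$ (here $\Char\F\neq3$ is used). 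Consequently $\lambda$ is a primitive cube root of unity; in particular $\omega\in\F$, and $w$ is proportional either to $U=\tfrac{1+2\omega}{3}(P+R\omega+S\omega^2)$ or to $V=\tfrac{1+2\omega^2}{3}(P+R\omega^2+S\omega)$.

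The second key step, which carries the new information beyond Theorem~\ref{main}, is to bring in the automorphism $\psi$ of $\M_0$ given by $U\mapsto V$, $V\mapsto U$ --- well defined precisely because we have just forced $\omega\in\F$ and $\Char\F\neq3$. By the same invariance principle, $\psi(\langle w\rangle)=\langle w\rangle$. But $\{U,V\}$ is a basis of $\M_0$, so if $w\propto U$ then $\psi(w)\propto V$ cannot be proportional to $w$, and the case $w\propto V$ is symmetric. This contradiction rules out the one-dimensional case and finishes the proof. (As a consistency check, this matches Lemma~\ref{pi0} and the remark preceding the theorem: whenever an evaluation on $\M_0$ is not all of $\M_0$, it must in fact be $\{0\}$, i.e.\ $p$ is a PI of $\M_0$.)

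I do not expect a serious obstacle; the proof is short once one notices that $\psi$ --- the automorphism of $\M_0$ that does \emph{not} extend to $\M$ --- is exactly what collapses the two one-dimensional candidates $\langle U\rangle,\langle V\rangle$ surviving in Theorem~\ref{main}. The only points needing a little care are: (i) verifying that $\varphi$ genuinely preserves $\M_0$ and that $\psi$ is genuinely an $\F$-algebra automorphism once $\omega\in\F$; and (ii) running the argument with the image taken \emph{over $\M_0$} directly, so that $\omega\in\F$ is \emph{derived} from the assumption of a one-dimensional image rather than assumed beforehand, and no passage to a scalar extension is required.
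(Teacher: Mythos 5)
Your proposal is correct and follows essentially the same route as the paper: reduce via Lemma~\ref{2 dim lemma} to excluding a one-dimensional image, use the order-three automorphism $\varphi$ to force the line to be $\langle U\rangle$ or $\langle V\rangle$, and then use $\psi$ (which swaps $U$ and $V$) to rule both out. Your added care in deriving $\omega\in\F$ from the existence of a one-dimensional image, rather than assuming it as the paper's section-wide hypothesis does, is a minor but genuine refinement, not a different argument.
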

\begin{proof}
Indeed, there are no $1$-dimensional multilinear evaluations since, if $\Image p=<aU+bV>$, $aU+bV$ must be proportional to $\varphi(aU+bV)=a\omega^2 U+b\omega V$ which is possible only if one of the coefficients $a$ or $b$ equals zero. However, this is also impossible, since in this case, $\Image p$ is not invariant under $\psi$.
\end{proof}
Another important subalgebra of $\M$ is the  subalgebra $\tilde{\M}$: the rock-paper-scissors without unit, i.e. the kernel of the homomorphism $\Sc$. Here we take the three element basis $U,V$ (which were already defined) and $W=\frac{1}{3}(P+R+S)$. 
In this case, 
$$W^2=W,\ WU=\frac{1+2\omega}{3}U,\ \text{and}\  WV=\frac{1+2\omega^2}{3}V.$$
The problem as to whether a multilinear evaluation  on $\tilde\M$ must be a vector space, remains being open. However, we can prove the following:
\begin{lemma}\label{tilde}
Let $\F$ satisfy $\Char\F\neq 3$. If $p(x_1,x_2,\dots,x_m)$ is a  multilinear commutative non-associative  polynomial with coefficients in $\F$, then the evaluation of $p$ over $\tilde\M$ is either:
\begin{enumerate}
\item $\{0\}$;
\item $\left<U\right>$;
\item $\left<V\right>$;
\item $\left<W\right>$;
\item ${\M}_{0}=\left<U,V\right>$; 
\item $\left<U,W\right>$;
\item $\left<V,W\right>$;
\item Zariski dense in $\tilde\M$.
\end{enumerate}
\end{lemma}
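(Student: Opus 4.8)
To prove Lemma~\ref{tilde}, the plan is to reduce the statement to elementary linear algebra, the only non-trivial inputs being Remark~\ref{lowdim} and the order-three automorphism $\varphi$ of $\M$ introduced in the proof of Theorem~\ref{main}.

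\emph{Step 1: reduction to subspaces.} Throughout this section $\omega\in\F$, so the elements $U,V,W$ are defined. Since $\tilde\M$ is $3$-dimensional, Remark~\ref{lowdim} (which rests on Lemma~\ref{2 dim lemma}) tells us that $\Image p$ is either a vector subspace of $\tilde\M$ of dimension at most $2$, or a set whose Zariski closure is all of $\tilde\M$, i.e.\ Zariski dense in $\tilde\M$; the case where $\Image p$ equals $\tilde\M$ as a vector space is subsumed in the latter. If $\Image p$ is Zariski dense we are in item~(8). Hence from now on $\Image p$ is a vector subspace $L\subseteq\tilde\M$ with $\dim L\le 2$, and it suffices to determine the possible $L$.

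\emph{Step 2: $\varphi$-invariance and classification.} The automorphism $\varphi\colon P\mapsto R\mapsto S\mapsto P$ of $\M$ preserves $\Sc$, hence restricts to an automorphism of $\tilde\M=\ker\Sc$; on $U,V,W$ it acts by $\varphi(W)=W$, $\varphi(U)=\omega^2U$ and $\varphi(V)=\omega V$, as recorded earlier in this section. Because $\varphi(p(a_1,\dots,a_m))=p(\varphi(a_1),\dots,\varphi(a_m))$ for all $a_i\in\tilde\M$ and $\varphi$ is bijective, $\Image p$, and therefore $L$, is $\varphi$-invariant. Now $\varphi|_{\tilde\M}$ is diagonalizable with the three eigenvalues $1,\omega,\omega^2$ (pairwise distinct since $\Char\F\ne 3$), whose eigenlines are $\langle W\rangle$, $\langle V\rangle$, $\langle U\rangle$. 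An invariant subspace of a diagonalizable operator with simple spectrum is a sum of eigenlines, so the only $\varphi$-invariant subspaces of $\tilde\M$ are $\{0\}$; the lines $\langle U\rangle$, $\langle V\rangle$, $\langle W\rangle$; the planes $\langle U,V\rangle=\M_0$, $\langle U,W\rangle$, $\langle V,W\rangle$; and $\tilde\M$ itself. Since $\dim L\le 2$, $L$ is one of the first seven, which are exactly items~(1)--(7), completing the proof.

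\emph{On the difficulty.} The argument is short; the only delicate points are extracting from Remark~\ref{lowdim} that in a $3$-dimensional algebra an image which is not a subspace is automatically Zariski dense, and the eigenvalue computation for $\varphi$ on $U$ and $V$, which is already available in the excerpt. The genuinely hard issue is one the lemma deliberately sidesteps: whether item~(8) can be sharpened to ``$\Image p=\tilde\M$'', equivalently whether every Zariski dense multilinear image on $\tilde\M$ must be all of $\tilde\M$. This is the open problem stated just before the lemma; a natural line of attack would combine the homomorphism $\trace\colon\tilde\M\to\F$ with the multiplication rules $W^2=W$, $WU=\frac{1+2\omega}{3}U$, $WV=\frac{1+2\omega^2}{3}V$, but I would not expect it to be routine.
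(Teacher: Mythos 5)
Your proof is correct and follows essentially the same route as the paper: reduce via Lemma~\ref{2 dim lemma} (Remark~\ref{lowdim}) to the dichotomy ``subspace of dimension at most $2$ or Zariski dense,'' then classify the $\varphi$-invariant subspaces of $\tilde\M$. The paper handles the $2$-dimensional case by explicitly spanning $q,\varphi(q),\varphi^2(q)$ for a generic image element, which is just the concrete form of your observation that $\varphi|_{\tilde\M}$ has simple spectrum $1,\omega,\omega^2$ so its invariant subspaces are sums of the eigenlines $\left<W\right>,\left<V\right>,\left<U\right>$.
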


\begin{proof}
According to Lemma \ref{2 dim lemma}, $\Image p$ must be either a vector space or at least $3$-dimensional.  

If the  dimension  of $\Image p$ is no more than $2$, it is a vector space. If $\dim\Image p= 0$, then $p$ is PI and we have the case (1). If $\dim\Image p=1$, $\Image p$ is a line which is invariant under the automorphism $\varphi$, and we have one of the cases (2)-(4). If $\dim\Image p= 2$, we have a two-dimensional subspace invariant under $\varphi$.
Consider some evaluation of $p$: $q=aU+bV+cW$ with at least two nonzero coefficients
$a,b,c\in\F$.
Note that $\varphi(U)=\omega^2 U, \varphi(V)=\omega V, \varphi(W)=W.$ Thus, the elements $\varphi(q)=a\omega^2 U+b\omega V+cW$ and $\varphi^2(q)=a\omega U+ b\omega^2 V+cW$ belong to the image of $p$. The linear span of these three elements is the linear span of the basis elements $\{U,V,W\}$.
 In particular, we obtain that one of the coefficients of $q$ must zero, and the ($2$-dimensional) image of $p$ contains a plane spanned by two basis elements (and thus coincides with it).
Therefore, we have one of the cases (5)-(7).
Finally, if $\dim\Image p= 3$,  the image is a Zariski dense subset, and we have the case (8).
\end{proof}

Note that not all these options are possible:
\begin{theorem}
Let $\F$ satisfy $\Char\F\neq 3$. If $p(x_1,x_2,\dots,x_m)$ is a  multilinear commutative non-associative  polynomial with coefficients in $\F$, then the evaluation of $p$ over $\tilde\M$ is either:
\begin{enumerate}
\item $\{0\}$;
\item $\left<U\right>$;
\item $\left<V\right>$;
\item ${\M}_{0}=\left<U,V\right>$; 
\item Zariski dense in $\tilde\M$.
\end{enumerate}
\end{theorem}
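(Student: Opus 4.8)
The plan is to invoke Lemma \ref{tilde}, which already tells us that $\Image p$ is one of eight specified sets, and then to rule out exactly the three that involve $W$ without involving $U$ and $V$, namely $\langle W\rangle$, $\langle U,W\rangle$ and $\langle V,W\rangle$. As in the proof of Lemma \ref{lemma-vec} I would split into cases according to the value of $c:=c_1+\dots+c_d$, the sum of the coefficients of the monomials of $p$.

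If $c=0$, the argument is the trace argument from Lemma \ref{lemma-vec}, now carried out on the subalgebra. Since $\trace\colon\M\to\F$ is a homomorphism and $\tilde\M$ is a subalgebra, $\trace$ restricts to a homomorphism $\tilde\M\to\F$; moreover $\trace(U)=\trace(V)=0$ and $\trace(W)=1$, so its kernel on $\tilde\M$ is exactly $\langle U,V\rangle=\M_0$. For any evaluation, $\trace\bigl(\tilde p(a_1,\dots,a_m)\bigr)=c\prod_i\trace(a_i)=0$, hence $\Image p\subseteq\M_0$. Among the eight options of Lemma \ref{tilde}, the only ones contained in the two-dimensional space $\M_0$ are $\{0\}$, $\langle U\rangle$, $\langle V\rangle$ and $\M_0$ itself, i.e. options (1)--(4) of the theorem.

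If $c\neq0$, the point is to force two particular linearly independent vectors into $\Image p$. First, $W$ is idempotent, so every monomial of $p$ evaluated at $(W,\dots,W)$ is $W$; thus $\tilde p(W,\dots,W)=cW$, and by linearity in the first variable $\tilde p(c^{-1}W,W,\dots,W)=W\in\Image p$. Second, $U+V$ is also idempotent, because $(U+V)^2=U^2+2UV+V^2=V+0+U=U+V$ (using $U^2=V$, $V^2=U$, $UV=0$), so the same device gives $U+V=\tilde p\bigl(c^{-1}(U+V),U+V,\dots,U+V\bigr)\in\Image p$. Since $\trace(W)=1\neq0=\trace(U+V)$, these two vectors are linearly independent. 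Now I check them against the eight options of Lemma \ref{tilde}: $W\in\Image p$ excludes $\{0\}$, $\langle U\rangle$, $\langle V\rangle$ and $\M_0$; and $U+V\in\Image p$ excludes $\langle W\rangle$, $\langle U,W\rangle$ and $\langle V,W\rangle$, since $U+V$ lies in none of these (for the latter two note that $V\notin\langle U,W\rangle$ and $U\notin\langle V,W\rangle$). The only surviving possibility is that $\Image p$ is Zariski dense in $\tilde\M$, which is option (5) of the theorem.

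Since the argument is this short, there is no real obstacle to overcome; the one non-obvious ingredient is the use of the idempotent $U+V$, which is transverse both to $\langle W\rangle$ and to $\M_0$, and is therefore exactly what prevents $\Image p$ from collapsing into one of the forbidden two-dimensional subspaces. The remaining work is the bookkeeping of matching the two produced vectors against the list in Lemma \ref{tilde}, plus the elementary trace computations $\trace(U)=\trace(V)=0$, $\trace(W)=1$; I would also remark that the hypothesis $\Char\F\neq3$ enters only through the well-definedness of $U$, $V$, $W$ and of $\trace(W)=\tfrac13(1+1+1)$, so the proof is uniform over all remaining characteristics.
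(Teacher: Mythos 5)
Your proof is correct and follows the same overall strategy as the paper: both start from the eight-way classification of Lemma \ref{tilde} and then eliminate the three options involving $W$, namely $\langle W\rangle$, $\langle U,W\rangle$ and $\langle V,W\rangle$. The differences are in the execution. For $c\neq 0$ the paper evaluates $p$ at the idempotents $P$, $R$, $S$ of $\tilde\M$, which forces the linear span of $\Image p$ to be all of $\tilde\M$ and so rules out options (1)--(7) of Lemma \ref{tilde} in one stroke; you instead use the two idempotents $W$ and $U+V$ (your computation $(U+V)^2=U^2+2UV+V^2=V+0+U=U+V$ is valid in every characteristic, since $UV=0$) and then check the two resulting image elements against the list, which is equivalent but needs the extra bookkeeping you describe. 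More significantly, the paper's printed proof only treats the case $c\neq 0$ and leaves implicit why the $W$-involving options cannot occur when $c=0$; your trace argument ($\trace(U)=\trace(V)=0$, $\trace(W)=1$, so $c=0$ forces $\Image p\subseteq\M_0=\langle U,V\rangle$) supplies exactly that missing half, so your write-up is in fact the more complete of the two.
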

\begin{proof}
Options $(4),(6)$ and $(7)$ in Lemma \ref{tilde} are impossible since if the sum of coefficients of $p$ is $c\neq 0$, the element $p(I,I,I,\dots,I)=cI$ belongs to the image of $p$, for $I$ being arbitrary idempotent of $\tilde M$. In particular, $P,R$ and $S$ are idempotents. In this case the linear span of $\Image p$ must be equal to $\tilde M$, which does not hold for these three cases. 
\end{proof}

Unfortunately, the question of possibility of cases $(2)$ and $(3)$  remains being open. The other problem is, whether or not the polynomial in the last case  must be surjective.

\section{Semi-homogeneous polynomials}\label{homogen}
In this section, we have more questions than answers. Nevertheless, we  can definitely  claim the following: the evaluation of any such polynomial should be invariant with respect to the automorphism $\varphi$.
We have a conjecture:
\begin{conj}\label{conj-1var}
Any homogeneous polynomial in one variable with nonzero sum of coefficients, has a Zariski dense image set.
\end{conj}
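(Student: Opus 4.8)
The plan is to read the statement geometrically. Regard $p$ as a polynomial self-map of the affine space $\M\cong\mathbb A^4$ (base-changed to $\overline{\F}$ if $\F$ is not algebraically closed). Then $\Image p$ is Zariski dense in $\M$ precisely when $p$ is dominant, and for a polynomial self-map of $\mathbb A^4$ a sufficient condition for dominance is that the Jacobian determinant be not identically zero — equivalently, that the differential $(dp)_x$ be invertible for at least one point $x$. So the first step is to produce such a point.

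The obvious candidate is the unit $1\in\M$. Writing $p=\sum_i c_i m_i$, where the $m_i$ are the distinct bracketings of $d:=\deg p$ copies of the variable, we have $m_i(1)=1$ for every $i$; and since $1$ is the unit, the differential of $m_i$ at $1$ sends $h$ to the sum, over the $d$ leaves, of the bracketing obtained by placing $h$ at that leaf and $1$ at every other leaf, which equals $h$ each time. Hence $(dp)_1=\bigl(\sum_i c_i\bigr)\, d\cdot \mathrm{id}_\M$. Since $\sum_i c_i\neq 0$ by hypothesis, $(dp)_1$ is invertible whenever $\Char\F\nmid d$. Thus the conjecture holds over every field of characteristic $0$, and more generally whenever the characteristic does not divide $\deg p$.

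The remaining case $\Char\F=q$ with $q\mid d$ is, I expect, the real obstacle. Here $(dp)_1=0$, and in fact the Jacobian of $p$ may vanish identically: already the squaring map $\sigma(x)=x\cdot x$ on $\M$ is additive (as $2=0$) and $\overline{\F}$-semilinear, hence a dominant but inseparable self-map of $\M$ with identically zero Jacobian — so in this regime the Jacobian criterion is powerless and dominance must be shown directly. The route I would pursue uses the surjective algebra homomorphism $(\Sc,\trace)\colon\M\to\F\times\F$, whose kernel is the ideal $\M_0$, to decompose $\M=(\F\times\F)\ltimes\M_0$: because $\Sc$ and $\trace$ respect multiplication, $\Sc(p(x))=\bigl(\sum c_i\bigr)\Sc(x)^d$ and $\trace(p(x))=\bigl(\sum c_i\bigr)\trace(x)^d$, and the image of a non-constant $d$-th power map on $\F$ is Zariski dense in $\mathbb A^1$ even when $q\mid d$. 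It would then remain to show that for a generic element $x'$ of the subalgebra $\F\times\F$ the map $\M_0\to\M_0$, $n\mapsto(\text{$\M_0$-component of }p(x'+n))$, is dominant; for $q\nmid d$ this follows as in the previous paragraph (its differential at $n=0$ is $(\sum c_i)d\cdot\mathrm{id}_{\M_0}$), but for $q\mid d$ its differential again vanishes — e.g. in characteristic $2$ a short computation shows the $\M_0$-directional differential of $p$ is zero at every point of $\F\times\F$ — so one meets an inseparable map once more.

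The tool that suggests itself for $q\mid d$ is to factor $p$ through a Frobenius-type endomorphism of $\M$: in characteristic $2$ through $\sigma(x)=x\cdot x$ (additive, semilinear, dominant), or in characteristic $q$ through some fixed bracketing of $q$ copies of $x$ that happens to be additive on $\M$. If $p$ could be written as $\tilde p\circ\sigma^{\circ e}$ with $d=q^e d'$, $q\nmid d'$, and $\sum(\text{coeffs of }\tilde p)=\sum c_i\neq 0$, then $\Char\F\nmid\deg\tilde p$ would reduce us to the case already settled, and composing with the dominant $\sigma^{\circ e}$ would finish the proof. The hard part is that this factorization fails in general: because $\M$ is non-associative, a bracketing of $q^e d'$ copies of $x$ need not be a bracketing of $d'$ copies of $\sigma^{\circ e}(x)$ — the required regrouping is not permitted, as the "caterpillar" bracketing $(\cdots((xx)x)\cdots)x$ already shows — so a general $p$ does not factor through $\sigma$. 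Controlling the joint image of a family of monomials none of which individually factors through the Frobenius map is, I believe, where the real difficulty (and likely a new idea) lies; it is also plausible that the hypothesis $\sum c_i\neq 0$ must be used in an essential way here, precisely to rule out pathological cancellations among the images of the individual monomials.
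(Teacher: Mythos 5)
First, context: the statement you were asked to prove is Conjecture \ref{conj-1var}, which the authors explicitly leave open (``in this section, we have more questions than answers''), so there is no proof in the paper to compare against; your attempt must be judged on its own. On those terms it is an honest partial result, not a proof. The first half is correct and goes beyond what the paper contains: since $1$ is the unit of $\M$, every bracketing $m_i$ of $d$ copies of $x$ satisfies $m_i(1)=1$, and the linear term of $m_i(1+h)$ is the sum over the $d$ leaves of the bracketing with $h$ at that leaf and $1$ elsewhere, each of which equals $h$; hence $(dp)_1=\bigl(\sum_i c_i\bigr)\,d\cdot\mathrm{id}_{\M}$, and $p$ is dominant whenever $\sum_i c_i\neq 0$ and $\Char\F\nmid d$. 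Two caveats you should make explicit: the Jacobian criterion for dominance in positive characteristic rests on the (true, but not trivial) fact that the generic rank of $dp$ is bounded by the dimension of the Zariski closure of the image; and passing from dominance over $\overline{\F}$ to density of the image of the $\F$-points requires $\F$ infinite --- for finite $\F$ the image is a finite set and is never Zariski dense, so the conjecture must implicitly assume $\F$ infinite, a hypothesis the paper does not state.

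The genuine gap is the case $\Char\F=q$ with $q\mid d$, which you correctly identify and do not close. There $(dp)_1=0$, and your own example of the squaring map in characteristic $2$ shows that the Jacobian of a dominant self-map of $\M$ can vanish identically (an inseparable map), so no differential criterion can settle this case and dominance must be proved directly. Your proposed reduction --- factoring $p$ through a Frobenius-type endomorphism $\sigma$ --- fails for exactly the reason you state: in a non-associative algebra a bracketing of $q^ed'$ copies of $x$ is not in general obtained by substituting $\sigma^{\circ e}(x)$ into a bracketing of $d'$ copies, as the caterpillar monomial already shows, so a general homogeneous $p$ does not factor through $\sigma$ and the joint image of the non-factoring monomials is not controlled by your argument. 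Until that case is handled the conjecture remains open, as the authors themselves say; what you actually have is a correct proof for infinite fields of characteristic zero, and more generally whenever $\Char\F\nmid\deg p$.
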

This is an interesting question to study. Indeed, considering monomials depending on one variable, there are different monomials of the same degree. For instance, the monomial $(x^2)^2$ is not the same as $x(x(x^2))$. Not only as monomials are they different: they also have different evaluations.
For example, if $x=P+R-2S$ then  $(x^2)^2=9(R-P)$ and
 $x(x(x^2))=9(P-R)$. Of course, if $\Char\F=2$, this is the same. The case $\Char \F=2$ can be considered separately. In  particular, for $\F=\F_2$ being the field of two elements, the evaluations of $(x^2)^2$ and $x(x(x^2))$ coincide for every $x$. Moreover, evaluations of any two monomials in one variable of equal degree coincide, which makes the monomial function $x^d$ being well defined.

If Conjecture \ref{conj-1var} holds, we can conclude the following:
\begin{conj}
Let $p(x_1,\dots, x_m)$ be any semi-homogeneous polynomial evaluated on $\M$ of degree $d$, and suppose that the field $\F$ is closed under  $d$-roots. Then the image set of $p$ satisfies one of the following conditions:
\begin{enumerate}
\item $\Image p=\{0\}$;
\item $\Image p$=$\left<P+R\omega+S\omega^2\right>$;
\item $\Image p$=$\left<P+R\omega^2+S\omega\right>$;
\item $\Image p$ is Zariski dense in ${\M}_{0}$; 
\item $\Image p$ is Zariski dense in $\M$.
\end{enumerate}
\end{conj}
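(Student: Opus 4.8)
The plan is to follow the template of Lemma~\ref{lemma-vec} and Theorem~\ref{main}, using the one-variable statement Conjecture~\ref{conj-1var} in the role played there by Lemma~\ref{2 dim lemma}. Write $p$ as semi-homogeneous of degree $d\ge 1$ with positive integer weights $w_1,\dots,w_m$ on $x_1,\dots,x_m$, so $p(t^{w_1}a_1,\dots,t^{w_m}a_m)=t^d\,p(a_1,\dots,a_m)$ for $t\in\F$. First I would record three general features of $\Image p$. Since $\F$ is closed under $d$-th roots, every scalar is a $d$-th power, so $\Image p$ is stable under multiplication by $\F^\times$; as $d\ge 1$ it also contains $0=p(0,\dots,0)$, hence $\Image p$ is a cone. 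As $\varphi$ is an automorphism of $\M$ which permutes $\M^m$, we have $\varphi(\Image p)=\Image p$. Finally $\overline{\Image p}$ is irreducible, being the Zariski closure of the image of the irreducible variety $\M^m$ under a polynomial map (over a finite $\F$ this, and the notion of Zariski density, are to be understood after base change to the algebraic closure). It is convenient to introduce the scalar symbol $s(\lambda_1,\dots,\lambda_m):=\sum_M c_M\prod_i\lambda_i^{n_{M,i}}$, where $M$ runs over the monomials of $p$, $c_M$ is the coefficient of $M$, and $n_{M,i}$ the number of occurrences of $x_i$ in $M$. Because $\Sc$ and $\trace$ are homomorphisms $\M\to\F$ and each monomial is a product, $\Sc(p(a_1,\dots,a_m))=s(\Sc a_1,\dots,\Sc a_m)$ and $\trace(p(a_1,\dots,a_m))=s(\trace a_1,\dots,\trace a_m)$, and $s$ is semi-homogeneous of degree $d$. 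Since $\Sc,\trace\colon\M\to\F$ are surjective and may be prescribed independently, this yields the dichotomy: $\Image p\subseteq\M_0$ if and only if $s$ vanishes identically on $\F^m$.

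Suppose first that $s$ does not vanish on $\F^m$, and fix $\vec\mu=(\mu_1,\dots,\mu_m)\in\F^m$ with $s(\vec\mu)\neq 0$. For each $i$ choose a non-associative monomial $z_i(x)$ of degree $w_i$ in one variable $x$ (say the left-normed power), and set $q(x):=p(\mu_1 z_1(x),\dots,\mu_m z_m(x))$. Since $p$ is semi-homogeneous of degree $d$, each monomial of $p$ contributes to $q$ a monomial in $x$ of degree $\sum_i n_{M,i}w_i=d$, so $q$ is homogeneous of degree $d$; evaluating at $x=1$, where every positive-degree monomial in the unit equals $1$, gives $q(1)=p(\mu_1\cdot 1,\dots,\mu_m\cdot 1)=s(\vec\mu)\cdot 1\neq 0$, i.e.\ the sum of the coefficients of $q$ equals $s(\vec\mu)\neq 0$. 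By Conjecture~\ref{conj-1var}, $\Image q$ is Zariski dense in $\M$, and $\Image q\subseteq\Image p$, so $\Image p$ is Zariski dense in $\M$, which is case (5). (In particular, the familiar sub-case in which the sum of coefficients of $p$ is nonzero is the case $\vec\mu=(1,\dots,1)$.)

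Now suppose that $s$ vanishes on $\F^m$, so $\Image p\subseteq\M_0$, a $2$-dimensional irreducible space, and I would classify $\varphi$-invariant cones inside it. If $\dim\overline{\Image p}=2$ then $\overline{\Image p}=\M_0$, case (4); if $\dim\overline{\Image p}=0$ then $\Image p=\{0\}$, case (1). If $\dim\overline{\Image p}=1$, then by irreducibility $\overline{\Image p}$ is a single line $L$, which must be $\varphi$-invariant; since $\varphi|_{\M_0}$ has characteristic polynomial $\lambda^2+\lambda+1$, its $\F$-rational invariant lines are exactly the eigenlines, which for $\Char\F\neq 3$ and $\omega\in\F$ are $\F U=\langle P+R\omega+S\omega^2\rangle$ and $\F V=\langle P+R\omega^2+S\omega\rangle$ (if $\lambda^2+\lambda+1$ is irreducible over $\F$ there is no such line, so this case cannot occur, matching the unavailability of options (2) and (3); in characteristic $3$ it collapses to the single line $\langle P+R+S\rangle$). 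Finally, as $\Image p$ is an $\F^\times$-stable subset of $L$ and $\F^\times$ acts transitively on $L\setminus\{0\}$, $\Image p$ equals all of $L$, i.e.\ case (2) or (3). This exhausts the possibilities.

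Everything hinges on Conjecture~\ref{conj-1var}, used in the first case; proving it is the genuine obstacle, which is exactly why the present statement is offered as a conjecture rather than a theorem. Granting it, the one step requiring a real idea is the substitution $x_i\mapsto\mu_i z_i(x)$ with the scalars $\mu_i$ tuned so that the single-variable specialization has nonzero sum of coefficients $s(\vec\mu)$; the remainder is bookkeeping with $\Sc$, $\trace$, the cone property, irreducibility of the image closure, and the order-$3$ automorphism $\varphi$, as in the proofs already given. A subsidiary point needing care is the precise meaning of ``Zariski dense'' (and of irreducibility of $\overline{\Image p}$) when $\F$ is a finite field closed under $d$-th roots.
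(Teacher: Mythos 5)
Your derivation is correct conditionally on Conjecture \ref{conj-1var}, exactly as the paper's own argument is, and it follows the same skeleton: reduce the ``non-degenerate'' case to the one-variable conjecture, and in the degenerate case combine $\Image p\subseteq\M_0$ with the cone property and $\varphi$-invariance. The genuine difference lies in how the two cases are separated and how the reduction to one variable is performed. The paper splits on whether the raw sum of coefficients of $p$ vanishes and substitutes $q(x)=p(x,\dots,x)$; this is adequate when all weights equal $1$, but for a genuinely weighted semi-homogeneous polynomial --- say $p=(x_1x_1)(x_1x_1)-x_2x_2$ with weights $1,2$ and $d=4$ --- the diagonal substitution is not homogeneous, and the raw coefficient sum can vanish even though $\Sc(p(a,b))=\Sc(a)^4-\Sc(b)^2$ does not vanish identically, so $\Image p\not\subseteq\M_0$. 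Your scalar symbol $s$, the dichotomy ``$s$ vanishes on $\F^m$ or not,'' and the substitution $x_i\mapsto\mu_i z_i(x)$ with $z_i$ a degree-$w_i$ monomial chosen so that $s(\vec\mu)\neq 0$ repair exactly this: they give the correct criterion for $\Image p\subseteq\M_0$ and produce a legitimate homogeneous one-variable polynomial of degree $d$ with nonzero coefficient sum to feed into Conjecture \ref{conj-1var}. Your handling of the $1$-dimensional case (irreducible $\varphi$-invariant cone, hence an eigenline of $\varphi|_{\M_0}$, hence $\left<P+R\omega+S\omega^2\right>$ or $\left<P+R\omega^2+S\omega\right>$, with $\F^\times$-stability upgrading a dense subset of a line to the whole line) also makes explicit what the paper leaves implicit, including the non-existence of option (2)/(3) when $\omega\notin\F$. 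Both arguments, of course, stand or fall with Conjecture \ref{conj-1var}, which neither proves; your version is the one that actually covers the full semi-homogeneous generality claimed in the statement.
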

This conjecture follows from the previous one:
if the sum of the coefficients is not zero, we can consider the polynomial $q(x)=p(x,x,\dots,x)$, which according to Conjecture \ref{conj-1var}, has a Zariski dense image set  in $\M$, and its image is a subset of the image of $p$.
If the sum of the coefficients is zero, then, as we know,both $\Sc(p)$ and $\trace(p)$ vanish on each evaluation and hence $\Image p\subseteq\M_0$. The image of any semi-homogeneous polynomial is a cone, and  therefore, there are three options:The image is $2$-dimensional and its Zariski closure is $\M_0$; The image  is $1$-dimensional and its image is one of the two possible lines; The image is $0$-dimensional, and in this case, $p$ should be a PI.
Note that if conjecture \ref{conj-1var} does not hold, then there is one more possible option for the $1$-dimensional image: the line $\left<P+R+S\right>$.

\section{Conclusion}
Questions related to the evaluations of multilinear and homogeneous polynomials are actual and applicable. This work continues series of works \cite{BMR1,BMR2,BMR3,BMR4,M1,M2,BMRY}.

\end{document}